\newtheorem{thm}{Theorem}[section]
\newtheorem{defn}{Definition}[section]
\newtheorem{prop}{Proposition}[section]
\newtheorem{lem}{Lemma}[section]
\newtheorem{rem}{Remark}[section]
\newtheorem{exmpl}{Example}[section]
\journal{ }
\begin{document}
\begin{frontmatter}

\title{Cohomology and  Deformation of   Leibniz Superalgebras}

%
\author{RB Yadav\fnref{}\corref{mycorrespondingauthor}}
\cortext[mycorrespondingauthor]{Corresponding author}
\ead{rbyadav15@gmail.com}
\address{Sikkim University, Gangtok, Sikkim, 737102, \textsc{India}}

\begin{abstract}
In this article, we  introduce  a deformation cohomology of Leibniz superalgebras.  Also, we introduce  formal deformation theory of  Leibniz superalgebras. Using deformation cohomology we study the  formal deformation theory of Leibniz superalgebras.

\end{abstract}

\begin{keyword}
\texttt{Leibniz superalgebra, cohomology, Extension,  formal deformations, Equivalent formal deformations}
\MSC[2020] 17A70 \sep 17B99 \sep 16S80 \sep 13D10 \sep 13D03  \sep 16E40
\end{keyword}

\end{frontmatter}



\section{Introduction}\label{rbsec1}
Leibniz algebras were introduced by J.L Loday in \cite{MR1252069} as a noncommutative generalization of Lie algebras.   Lie superalgebras were studied and a classification was given by Kac \cite{MR486011}.  Leits \cite{MR0422372} introduced a cohomology for Lie superalgebras.  Leibniz superalgebras \cite{MR1888379} are a noncommutative generalizations of Lie superalgebras. Leibniz superalgebras were studied in \cite{MR2286721}, \cite{MR2102079}.

The deformation is a tool to study a mathematical object by deforming it into a family of the same kind of objects depending on a certain parameter. Algebraic deformation theory was introduced by Gerstenhaber for rings and algebras \cite{MR171807},\cite{MR0207793},\cite{MR240167}, \cite{MR389978}, \cite{MR704600}. Deformation theory of Lie superalgebras was introduced and studied by Binegar \cite{MR871615}.  Recently, algebraic
deformation theory has been studied by several authors \cite{MR4156100}, \cite{MR4171679}, \cite{MR4120091} etc.

Purpose of this paper is to  introduce   deformation cohomology  and  formal deformation theory of Leibniz superalgebras.  Organization of the paper is as follows. In Section \ref{rbsec2}, we recall definition of Leibniz superalgebra and give some  examples. In Section \ref{rbsec3}, we introduce  deformation complex and  deformation cohomology of Leibniz superalgebras. In Section \ref{rbsec4}, we compute cohomology of Leibniz superalgebras in degree $0$ and dimension $0$, $1$ and $2$.  In Section \ref{rbsec5}, we introduce  deformation theory of  Leibniz superalgebras. In this section  we see  that infinitesimals of deformations are  cocycles . Also, in this section we give an example of a formal deformation of a Leibniz superalgebras. In Section \ref{rbsec6}, we study equivalence of two formal deformations and prove that infinitesimals of any two equivalent deformations are cohomologous.
\section{Leibniz Superalgebras}\label{rbsec2}
In this section, we recall definitions of Leibniz superalgebra and  module over a Leibniz superalgebras. We give some examples of Leibniz superalgebras.  Throughout the paper we denote a fixed field  by $K$.  Also, we denote the ring of formal power series with coefficients in $K$ by $K[[t]]$.
In any $\mathbb{Z}_2$-graded vector space $V$  we use a notation in which we replace degree $deg(a)$ of an element $a\in V$ by 'a' whenever $deg(a)$ appears in an exponent; thus, for example $(-1)^{ab}=(-1)^{deg(a)deg(b)}$.
\begin{defn}
  Let $V=V_0\oplus V_1$ and $W=W_0\oplus W_1$ be $\mathbb{Z}_2$ graded vector spaces over a field $K$. A linear map $f:V \to W$ is said to  be homogeneous of degree $\alpha$ if $\deg(f(a))- \deg(a)=\alpha$, for all $a\in V_\beta$, $\beta\in \{0,1\}$. We write $(-1)^{\deg(f)}=(-1)^f$. Elements of $ V_\beta$ are called homogeneous of degree $\beta.$
\end{defn}
\begin{defn}\cite{MR2286721} A  (left) Leibniz superalgebra is a $\mathbb{Z}_2$-graded $K$-vector space $L=L_0\oplus L_1$  equipped with a bilinear map $[-,-]:L\times L\to L$ satisfying the following conditions:
\begin{enumerate}
  \item $[a,b]\in L_{\alpha+\beta}$,
  \item $[[a,b],c]=[a,[b,c]]-(-1)^{\alpha\beta}[b,[a,c]]$, \hspace{3cm}(Leibniz identity)
\end{enumerate}
for all $a\in L_{\alpha}$, $b\in L_{\beta}$ and $c\in L_{\gamma}$. If second condition in the Definition \ref{rbsec2} is replaced by $[x,[y,z]]=[[x,y],z]-(-1)^{yz}[[x,z],y]$, then $L$ is called right Leibniz superalgebra. In this paper we consider only left  Leibniz superalgebra.  Let $L_1$ and $L_2$ be two Leibniz superalgebras. A homomorphism $f:L_1\to L_2$ is  a $K$-linear map such that $f([a,b])=[f(a),f(b)].$  Given a Leibniz superalgebra $L$ we denote by $[L,L]$ the vector subspace of L spanned  by the set $\{[x,y]: x,y\in L\}$. A Leibniz superalgebra $L$ is called abelian if $[L,L]=0.$
\end{defn}
\begin{exmpl}
  Clearly, every Lie superalgebra is a Leibniz superalgebra. A Leibniz superalgebra $L=L_0\oplus L_1$ is a Lie superalgebra if $[a,b]=-(-1)^{ab}[b,a]$ for all $a\in L_{\alpha}$, $b\in L_{\beta}$.
\end{exmpl}
\begin{exmpl}
  Given any $\mathbb{Z}_2$-graded vector space $V=V_0\oplus V_1$ we can define  a multiplication on $V$ by $[x,y]=0$, for all $x\in V_\alpha$, $y\in V_\beta$. This gives an abelian Leibniz superalgebra structure on $V.$
\end{exmpl}
\begin{exmpl}
  Let $A=A_0\oplus A_1$ be an associative $K$-superalgebra equipped with a homogeneous linear map $T:A\to A$ of degree $0$ and satisfying the condition
  \begin{equation}\label{rbsl1}
     T(a(Tb))=(Ta)(Tb)=T((Ta)b),
  \end{equation}
   for all $a,b\in A$. Define a bilinear map on A by $$[a,b]=(Ta)b-(-1)^{ab}b(Ta),$$ for all $a\in A_{\alpha}$, $b\in A_{\beta}$. One can easily verify that $[-,-]$  satisfies the two conditions for a Leibniz superalgebra. This makes A a Leibniz superalgebra that we denote by $A_{SL}$. If $T=Id,$ then $A_{SL}$ turns out to be a Lie superalgebra. If $T$ is an algebra map on $A$ which is idempotent ($T^2=T$), then condition \ref{rbsl1} is satisfied. If $T$ is a square-zero derivation, that is, $T(ab)=(Ta)b+a(Tb)$ and $T^2=0,$ then the condition \ref{rbsl1} is satisfied.
\end{exmpl}
\begin{exmpl}
 Let V be a $\mathbb{Z}_2$-graded $K$-vector space.  The free Leibniz superalgebra $\mathcal{L}$(V ) is the universal Leibniz superalgebra for maps from $V$ to Leibniz superalgebras.  Let $ \overline{T}(V ) := \oplus_{n\ge 1}V^{\otimes n}$ be the reduced tensor module. \cite{MR2286721}  $\overline{T}(V )$ is the free Leibniz superalgebra over $V$ with the multiplication defined inductively by
 \begin{enumerate}
   \item $[v,x]=v\otimes x,$ for all $x\in \overline{T}(V )$, $v\in V$
   \item $[y\otimes v, x]=[y,v\otimes x]-(-1)^{yv}v\otimes[y,x]$, for all $x\in \overline{T}(V )$, $v\in V$ and homogeneous $y\in \overline{T}(V )$.
 \end{enumerate}

\end{exmpl}
\begin{exmpl}\label{rbLSe1}
  Let $L=L_0\oplus L_1$ be a $\mathbb{Z}_2$-graded $K$-vector space, where $L_0$ is two dimensional subspace of $L$ generated by $\{x,y\}$ and  $L_1$ is 1-dimensional generated by $\{z\}$. Define a bilinear map $[-,-]:L\times L\to L$ given by $$[y,x]=x,\;[y,y]=x,\;[x,x]=[x,y]=[x,z]=[z,x]=[z,z]=[z,y]=[y,z]=0.$$ One can easily verify that $L$ together with $[-,-]$ is a Leibniz superalgebra.
\end{exmpl}
\begin{defn}\cite{MR2286721}
  Let $L=L_0\oplus L_1$ be a Leibniz superalgebra. A $\mathbb{Z}_2$-graded vector space $M=M_0\oplus M_1$ over the field K is called a module over L if there exist two bilinear maps $[-,-]:L\times M\to M$ and $[-,-]:M\times L\to M$ (we use the same notation for both the maps and differentiate them from context) such that following conditions are satisfied
\begin{enumerate}
  \item $[[a,b],m]=[a,[b,m]]-(-1)^{ab}[b,[a,m]]$
  \item $[[a,m],b]=[a,[m,b]]-(-1)^{am}[m,[a,b]]$
  \item $[[m,a],b]=[m,[a,b]]-(-1)^{ma}[a,[m,b]]$,
\end{enumerate}
for all $a\in L_\alpha$, $b\in L_\beta,$ $m\in M_\gamma$, $\alpha,\beta,\gamma\in\{0, 1\}$.
\end{defn}
Clearly, every Leibniz superalgebra is a module over itself. In the next section we shall discuss some more examples of modules over Leibniz superalgebras.

\section{Cohomology of Leibniz Superalgebras}\label{rbsec3}
  Let $L=L_0\oplus L_1$ be a Leibniz superalgebra and $M=M_0\oplus M_1$ be a module over $L$. An n-linear map $f:L \underset{n\; times}{\underbrace{\times\cdots\times}}L\to M$ is said to  be homogeneous of degree $\alpha$ if $\deg(f(x_1,\cdots, x_n))-\sum_{i=1}^{n}\deg(x_i))=\alpha$, for homogeneous  $x_i\in L$, $1\le i\le n.$ We denote the  degree of a homogeneous $f$ by $\deg(f)$.  We write $(-1)^{\deg(f)}=(-1)^f$. For each $n\ge 0$, we define  a $K$-vector space  $C^{n}(L;M)$ as follows:  For $n\ge 1$, $C^{n}(L;M)$ consists of  those  $ f\in Hom_K(L^{\otimes (n)},M)$  which are homogeneous,
   and $C^0(L;M)=M$. Clearly,  $C^{n}(L;M)=C_0^{n}(L;M)\oplus C_1^{n}(L;M)$, where $C_0^{n}(L;M)$ and $ C_1^{n}(L;M)$ are submodules of $C^{n}(L;M)$ containing elements of degree 0 and 1, respectively. We define a  $K$-linear map $\delta^n:C^{n}(L;M)\to C^{n+1}(L;M)$  by
  \begin{eqnarray*}
    &&\delta^n f(x_1,\cdots, x_{n+1})\\
     &=& \sum_{i<j}(-1)^{i+x_i(x_{i+1}+\cdots+x_{j-1})}f(x_1,\cdots,\hat{x_i},\cdots,[x_i,x_j],\hat{x_j},\cdots,x_{n+1}) \\
     && +\sum_{i=1}^{n}(-1)^{i+1+x_i(f+x_1+\cdots+x_{i-1})}[x_i,f(x_1,\cdots,\hat{x_i},\cdots,x_{n+1})]\\
     &&+(-1)^{n+1}[f(x_1,\cdots,\cdots,x_{n}),x_{n+1}],
  \end{eqnarray*}  for all $f\in C^{n}(L;M)$, $n\ge 1$, and $\delta^0f(x_1)=-[f,x_1]$, for all $f\in C^0(L;M)=M$. Clearly, for each $f\in C^{n}(L;M)$, $n\ge 0,$  $\deg(\delta f)=\deg(f).$
  For homogeneous $x\in L$ we define a linear map $d_x:C^{n}(L;M)\to C^{n}(L;M)$ as follows: for $n>0$, $f\in C^{n}(L;M)$ we set
  \begin{eqnarray*}
    &&d_xf(y_1,\cdots,y_n)\\
     &=& [x,f(y_1,\cdots,y_n)]-\sum_{i=1}^{n}(-1)^{x(f+y_1+\cdots+y_{i-1})}f(y_1,\cdots,[x,y_i],\cdots,y_n),
  \end{eqnarray*}
  and for $f\in C^{0}(L;M)$ set $d_xf=[x,f].$ For $x\in L$, we also define a linear mapping $f\mapsto f_x$ from  $C^{n+1}(L;M)\to C^{n}(L;M)$ by setting $f_x(y_1\cdots,y_n)=f(x,y_1\cdots,y_n)$. One can easily verify that for all  $f\in C^{n}(L;M)$, $n\ge 0,$ and homogeneous $x\in L$ $\deg(d_x f)=\deg(f)+\deg(x)$, $\deg(f_x)=\deg(f)+\deg(x)$.  As a direct consequence of the definitions of $d_x$, $f_x$  and $\delta$ we have following lemma.

  \begin{lem}\label{LS1}
    \begin{enumerate}
      \item[(i)] For $f\in C^{n}(L;M)$ and homogeneous  $x,y\in L$,  $$(d_xf)_y=d_x(f_y)-(-1)^{xf}f_{[x,y]}.$$
      \item [(ii)] For $f\in C^{n}(L;M)$  and homogeneous  $x\in L$, $(\delta f)_x=(-1)^{xf}d_xf-\delta (f_x)$.
    \end{enumerate}
  \end{lem}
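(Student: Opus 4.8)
The plan is to prove both identities by evaluating each side on arbitrary homogeneous arguments and comparing term by term. Since the phrase ``as a direct consequence of the definitions'' signals that this is a computation, the real content lies in the sign bookkeeping rather than in any structural idea, and my strategy in each part is to isolate a distinguished first slot and track three sources of signs: the reindexing parity, the Koszul exponents, and the degree shifts $\deg(f_y)=\deg(f)+\deg(y)$ and $\deg(f_x)=\deg(f)+\deg(x)$ recorded earlier.

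For part (i), I would fix $f\in C^n(L;M)$ and homogeneous $x,y\in L$ and evaluate both sides on $(y_1,\dots,y_{n-1})$. On the left, $(d_xf)_y(y_1,\dots,y_{n-1})=(d_xf)(y,y_1,\dots,y_{n-1})$, and I apply the definition of $d_x$ to the $n$-tuple $(y,y_1,\dots,y_{n-1})$. The key step is to single out the first slot of the resulting sum: the term that brackets $x$ into the slot occupied by $y$ produces exactly $-(-1)^{xf}f([x,y],y_1,\dots,y_{n-1})=-(-1)^{xf}f_{[x,y]}(y_1,\dots,y_{n-1})$. The leading term $[x,f(y,y_1,\dots,y_{n-1})]$ together with the sum over the remaining $n-1$ slots then reassembles, after rewriting $f(y,\dots)=f_y(\dots)$ and using $\deg(f_y)=\deg(f)+\deg(y)$ to reconcile the exponents, into precisely $d_x(f_y)(y_1,\dots,y_{n-1})$. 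This gives the identity.

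For part (ii), I would again evaluate both sides on $(y_1,\dots,y_n)$, so that the left-hand side becomes $\delta f(x,y_1,\dots,y_n)$, an $(n+1)$-argument expression with the distinguished argument $x$ in the first slot. I would split each of the three sums defining $\delta f$ according to whether the running index $i$ equals $1$ (the slot holding $x$) or $i\ge 2$. The terms with $i=1$, namely the $i=1$ contributions of the double ``bracket-in-a-slot'' sum together with the $i=1$ term $(-1)^{xf}[x,f(y_1,\dots,y_n)]$ of the $[x_i,f]$-sum, reassemble into $(-1)^{xf}\,d_xf(y_1,\dots,y_n)$; here the global factor $(-1)^{xf}$ is exactly what converts the exponents $x(y_1+\cdots+y_{k-1})$ appearing in $\delta$ into the exponents $x(f+y_1+\cdots+y_{k-1})$ built into $d_x$. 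The terms with $i\ge 2$, together with the final term $(-1)^{n+1}[f_x(y_1,\dots,y_{n-1}),y_n]$, are handled by the reindexing $p=i-1$: each surviving sign $(-1)^i$ becomes $-(-1)^{p}$, and since $x$ occupies the first slot, lying before the bracketed entry, the accumulated $x$ in the exponents fuses with $\deg(f)$ to match the degree $\deg(f_x)=\deg(f)+\deg(x)$ that appears in the differential of the $(n-1)$-cochain $f_x$. Collecting these contributions yields precisely $-\delta(f_x)(y_1,\dots,y_n)$.

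The main obstacle is purely the sign analysis in part (ii): one must check simultaneously that the parity shift from $(-1)^i$ to $(-1)^{i-1}$ under reindexing supplies the overall minus sign in front of $\delta(f_x)$, that the degree identity $\deg(f_x)=\deg(f)+\deg(x)$ reconciles the Koszul signs in the $[y_i,f_x]$-terms, and that the auxiliary factor $(-1)^{xf}$ correctly accounts for commuting $x$ past $f$ in the $i=1$ block. Once the first slot is isolated and these three sign sources are tracked in parallel, both identities fall out by direct comparison, with no input beyond the definitions of $\delta$, $d_x$, and $f_x$.
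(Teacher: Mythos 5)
Your proposal is correct and follows essentially the same route as the paper's proof: evaluate both sides on homogeneous arguments with the distinguished element in the first slot, split the sums defining $d_x$ (resp.\ $\delta$) according to whether the running index hits that slot, and use the degree shifts $\deg(f_y)=\deg(f)+\deg(y)$, $\deg(f_x)=\deg(f)+\deg(x)$ to reconcile the Koszul exponents. All of your sign claims (the $-(-1)^{xf}f_{[x,y]}$ term in (i), the $(-1)^{xf}$ factor assembling the $i=1$ block into $(-1)^{xf}d_xf$, and the reindexing $p=i-1$ producing $-\delta(f_x)$ in (ii)) check out against the paper's computation.
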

  \begin{proof}
  \begin{eqnarray*}
     && (d_xf)_y(x_1,\cdots, x_{n} ) \\
     &=& (d_xf)(y,x_1,\cdots, x_{n} )  \\
     &=& [x,f(y,x_1,\cdots, x_{n} )] - \sum_{j=1}^n(-1)^{x(f+y+x_{1}+\cdots+x_{j-1})}f(y,x_1,\cdots,[x,x_j],\cdots,x_{n})\\
     && -(-1)^{xf}f([x,y],x_1,\cdots,x_{n})\\
     &=&(d_x(f_y)-(-1)^{xf}f_{[x,y]})(x_1,\cdots, x_{n} )
  \end{eqnarray*}
    \begin{eqnarray*}
    && (\delta f)_x(x_1,\cdots, x_{n}) \\
    &=& \delta f(x,x_1,\cdots, x_n) \\
    &=&\sum_{1\le i<j\le n}(-1)^{i+1+x_i(x_{i+1}+\cdots+x_{j-1})}f(x,x_1,\cdots,\hat{x_i},\cdots,[x_i,x_j],\cdots,x_{n})\\
    &&+ \sum_{j=1}^n(-1)^{1+x(x_{1}+\cdots+x_{j-1})}f(x_1,\cdots,[x,x_j],\hat{x_j},\cdots,x_{n})\\
    && +\sum_{i=1}^{n}(-1)^{i+2+x_i(f+x+x_1+\cdots+x_{i-1})}[x_i,f(x, x_1,\cdots,\hat{x_i},\cdots,x_{n})]\\
    && +(-1)^{xf}[x,f(x_1,\cdots,x_n)]+(-1)^{n-1}[f(x,x_1,\cdots,\cdots,x_{n-1}),x_n] \\
    &=& ((-1)^{xf}d_xf-\delta(f_x))(x_1,\cdots, x_n)
    \end{eqnarray*}
  \end{proof}
  \begin{lem}\label{LS2}
  \begin{enumerate}
    \item[(i)] For $f\in C^{n}(L;M)$ and  homogeneous  $x,y\in L$, $$d_xd_yf-(-1)^{xy}d_yd_xf=d_{[x,y]}f.$$
    \item[(ii)] For $f\in C^{n}(L;M)$ and homogeneous $x\in L$, $\delta d_xf=d_x\delta f.$
  \end{enumerate}
\end{lem}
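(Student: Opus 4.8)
The plan is to prove both identities by induction on the cochain degree $n$, using Lemma \ref{LS1} to strip off the first slot of a cochain. Conceptually the operators $d_x$ behave as Lie derivatives on $C^{\ast}(L;M)$: part (i) says that $x\mapsto d_x$ represents $L$ by graded commutators, and part (ii) says that each $d_x$ is a chain map. Since the argument for (ii) will invoke (i), I would prove (i) first. The basic device in both inductions is that a cochain $g\in C^{m}(L;M)$ with $m\ge 1$ is completely determined by the assignments $z\mapsto g_z$, where $g_z(y_1,\dots,y_{m-1})=g(z,y_1,\dots,y_{m-1})$; hence to prove an identity between two $(n+1)$-cochains it suffices to prove it after applying $(-)_z$ for every homogeneous $z\in L$, and since $(-)_z$ lowers degree, the inductive hypothesis applies to $f_z\in C^{n-1}(L;M)$.

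For part (i) the base case $n=0$ is immediate: for $f\in C^{0}(L;M)=M$ one has $d_xf=[x,f]$, so the claim reduces to $[x,[y,f]]-(-1)^{xy}[y,[x,f]]=[[x,y],f]$, which is precisely the first module axiom. For the inductive step I would apply $(-)_z$ to $d_xd_yf-(-1)^{xy}d_yd_xf$ and use Lemma \ref{LS1}(i) twice to rewrite $(d_xd_yf)_z$ and $(d_yd_xf)_z$ in terms of $d_x,d_y$ acting on the lower cochains $f_z$, $f_{[x,z]}$, $f_{[y,z]}$. Three things then occur: the mixed terms $d_x(f_{[y,z]})$ and $d_y(f_{[x,z]})$ cancel (the required sign identities reducing to $(-1)^{2xy}=1$); the combination $d_xd_y(f_z)-(-1)^{xy}d_yd_x(f_z)$ collapses to $d_{[x,y]}(f_z)$ by the induction hypothesis; and the surviving pure $f$-terms recombine into $f_{[[x,y],z]}$ via the Leibniz identity $[[x,y],z]=[x,[y,z]]-(-1)^{xy}[y,[x,z]]$. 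Comparing with the expansion of $(d_{[x,y]}f)_z$ through Lemma \ref{LS1}(i) closes the step.

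For part (ii) the base case $n=0$ again reduces to a module axiom: a direct computation gives $\delta d_xf(y)=-[[x,f],y]$ and $d_x\delta f(y)=-[x,[f,y]]+(-1)^{xf}[f,[x,y]]$, so the identity becomes $[[x,f],y]=[x,[f,y]]-(-1)^{xf}[f,[x,y]]$, which is the second module axiom. For the inductive step I would apply $(-)_z$ to both $\delta d_xf$ and $d_x\delta f$. Using Lemma \ref{LS1}(ii) on $\delta d_xf$ and then Lemma \ref{LS1}(i) on $(d_xf)_z=d_x(f_z)-(-1)^{xf}f_{[x,z]}$ gives
\[
(\delta d_xf)_z=(-1)^{zf+zx}\,d_zd_xf-\delta\big(d_x(f_z)\big)+(-1)^{xf}\delta\big(f_{[x,z]}\big),
\]
while expanding $(d_x\delta f)_z$ by Lemma \ref{LS1}(i) and then Lemma \ref{LS1}(ii) gives
\[
(d_x\delta f)_z=(-1)^{zf}\,d_xd_zf-d_x\delta(f_z)-(-1)^{zf}\,d_{[x,z]}f+(-1)^{xf}\delta\big(f_{[x,z]}\big).
\]
The induction hypothesis applied to $f_z\in C^{n-1}(L;M)$ replaces $\delta\big(d_x(f_z)\big)$ by $d_x\delta(f_z)$, and the $\delta(f_{[x,z]})$ terms match outright; subtracting the two expressions leaves $(-1)^{zf}\big((-1)^{zx}d_zd_xf-d_xd_zf+d_{[x,z]}f\big)$, which vanishes by part (i) applied with $y=z$. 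As this holds for every homogeneous $z$, the two $(n+1)$-cochains coincide.

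The only genuine difficulty in either part is the Koszul sign bookkeeping: each time a homogeneous map or element is commuted past another the exponents must be tracked exactly, and one must use that $\deg(d_xf)=\deg f+\deg x$ (recorded just before the lemma) so that Lemma \ref{LS1} is applied with the correct signs. The substantive algebraic input is minimal -- the Leibniz identity for part (i) and the module axioms for the base cases -- so once the computation is organized around the $(-)_z$ reduction I expect everything else to be routine verification.
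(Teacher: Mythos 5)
Your proposal is correct and takes essentially the same route as the paper's own proof: induction on cochain degree, reduction of the identity between $(n+1)$-cochains to the restrictions $(-)_z$, repeated use of Lemma \ref{LS1}, the module axioms for the base cases, and part (i) invoked to close the inductive step of part (ii). The sign bookkeeping you describe (cancellation of the mixed terms, collapse via the induction hypothesis, and recombination of the pure $f$-terms through the Leibniz identity) matches the paper's computation line by line.
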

\begin{proof}
We use mathematical induction to prove this lemma. For $f\in C^{0}(L;M)=M $, by using Leibniz identity, we have
\begin{eqnarray*}
    d_xd_yf-(-1)^{xy}d_yd_xf &=& [x,[y,f]] -(-1)^{xy}[y,[x,f]]\\
     &=& [[x,y],f]\\
     &=&d_{[x,y]}f.
\end{eqnarray*}
 Suppose that $(i)$ holds for all $f\in C^{m}(L;M),$ $0\le m\le n$. Let $f\in C^{n+1}(L;M).$ It is enough to prove that $(d_xd_yf-(-1)^{xy}d_yd_xf)_z =(d_{[x,y]}f)_z$, for all homogeneous $z\in L$. We have
\begin{eqnarray*}
   &&(d_xd_yf-(-1)^{xy}d_yd_xf)_z \\
   &=& (d_xd_yf)_z-(-1)^{xy}(d_yd_xf)_z\\
   &=&d_x(d_yf)_z-(-1)^{x(f+y)}(d_yf)_{[x,z]}-(-1)^{xy}\{d_y(d_xf)_z-(-1)^{y(f+x)}(d_xf)_{[y,z]}\}\\
   &=&d_xd_y(f_z)-(-1)^{yf}d_xf_{[y,z]} -(-1)^{x(f+y)}\{d_y(f_{[x,z]})-(-1)^{yf}f_{[y,[x,z]]}\}\\
   &&-(-1)^{xy}d_yd_x(f_z)+(-1)^{xy+xf}d_yf_{[x,z]}+(-1)^{fy}\{d_x(f_{[y,z]})-(-1)^{xf}f_{[x,[y,z]]}\}\\
   &=&d_xd_y(f_z)-(-1)^{xy}d_yd_x(f_z)+(-1)^{yf+xf+xy}f_{[y,[x,z]]}-(-1)^{xf+yf}f_{[x,[y,z]]}\\
   &=&d_{[x,y]} f_z-(-1)^{(x+y)f}f_{[[x,y],z]}\\
   &=&(d_{[x,y]}f)_z
\end{eqnarray*}

For $f\in C^{0}(L;M)$, we have
\begin{eqnarray*}
  d_x\delta f (y_1) &=& [x,\delta f(y_1)]-(-1)^{xf}\delta f([x,y_1])\\
  & =&-[x,[f,y_1]]+(-1)^{xf}[f,[x,y_1]]\\
  &=& -[[x,f],y_1]\\
  &=&\delta d_xf(y_1).\\
\end{eqnarray*}

Suppose that $(ii)$ holds for all $f\in C^{m}(L;M),$ $0\le m\le n$. Let $f\in C^{n+1}(L;M).$ It is enough to prove that $(d_x\delta f-\delta d_xf)_z =0$, for all $z\in L$. We have
\begin{eqnarray*}
 ( \delta d_x f)_z-(d_x\delta f)_z &=& (-1)^{zx+zf}d_zd_xf-\delta((d_xf)_z)-d_x(\delta f)_z+(-1)^{xf}(\delta f)_{[x,z]} \\
   &=&(-1)^{zx+zf}d_zd_xf-\delta d_xf_z+(-1)^{xf}\delta f_{[x,z]}  \\
   && -(-1)^{zf}d_xd_zf+d_x\delta f_z+(-1)^{zf}d_{[x,z]}f-(-1)^{xf}\delta f_{[x,z]}\\
   &=&0.
\end{eqnarray*}
\end{proof}
\begin{thm}
    $\delta\circ \delta=0$, that is, $(C^{\ast}(L;M),\delta)$ is a cochain complex.
  \end{thm}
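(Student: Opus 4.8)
The plan is to prove $\delta^{n+1}\circ\delta^n=0$ by induction on the arity $n$, exploiting the contraction operator $f\mapsto f_x$ together with the two structural identities already established. The decisive reduction is the observation that a cochain $g\in C^{n+2}(L;M)$ vanishes if and only if $g_x=0$ for every homogeneous $x\in L$, since $g_x(y_1,\dots,y_{n+1})=g(x,y_1,\dots,y_{n+1})$ recovers all values of $g$. Thus it suffices to show $(\delta^2 f)_x=0$ for all $x$, and I expect this to collapse, via Lemmas \ref{LS1} and \ref{LS2}, to the single arity-lowering identity $(\delta^2 f)_x=\delta^2(f_x)$, after which the induction hypothesis applied to $f_x\in C^{n}(L;M)$ finishes immediately.

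For the base case $f\in C^0(L;M)=M$, I would expand $\delta^1\delta^0 f(x_1,x_2)$ directly from the definitions. Writing $g=\delta^0 f$ with $g(x_1)=-[f,x_1]$ and $\deg(g)=\deg(f)$, the three terms of $\delta^1 g$ produce
\[
\delta^1\delta^0 f(x_1,x_2)=[f,[x_1,x_2]]-(-1)^{x_1 f}[x_1,[f,x_2]]-[[f,x_1],x_2],
\]
which is precisely the third module axiom $[[m,a],b]=[m,[a,b]]-(-1)^{ma}[a,[m,b]]$ with $m=f$, $a=x_1$, $b=x_2$, hence zero. This anchors the induction.

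For the inductive step, I assume $\delta^2=0$ on $C^m(L;M)$ for all $m\le n$ and take $f\in C^{n+1}(L;M)$. Applying Lemma \ref{LS1}(ii) to the cochain $\delta f$, which has the same degree as $f$, gives $(\delta\delta f)_x=(-1)^{xf}d_x(\delta f)-\delta\big((\delta f)_x\big)$. Lemma \ref{LS2}(ii) rewrites $d_x\delta f=\delta d_x f$, while a second application of Lemma \ref{LS1}(ii) gives $(\delta f)_x=(-1)^{xf}d_x f-\delta(f_x)$, whence $\delta\big((\delta f)_x\big)=(-1)^{xf}\delta(d_x f)-\delta^2(f_x)$. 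Substituting, the two copies of $(-1)^{xf}\delta(d_x f)$ cancel and leave $(\delta^2 f)_x=\delta^2(f_x)$. Since $f_x\in C^{n}(L;M)$, the induction hypothesis yields $\delta^2(f_x)=0$, so $(\delta^2 f)_x=0$ for every homogeneous $x$ and therefore $\delta^2 f=0$.

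The routine parts are the bookkeeping of the Koszul signs in the base-case expansion and the repeated use of $\deg(\delta f)=\deg(f)$, needed to identify $(-1)^{x\,\delta f}=(-1)^{xf}$. The genuine content, and the only place where real cleverness is required, is recognizing that Lemmas \ref{LS1} and \ref{LS2} were arranged precisely so that the derivation and contraction terms telescope to the identity $(\delta^2 f)_x=\delta^2(f_x)$; once that is in hand, $\delta^2=0$ is a one-line induction rather than the direct $(n{+}2)$-fold sign computation that would otherwise constitute the main obstacle.
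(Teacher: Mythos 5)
Your proposal is correct and follows essentially the same route as the paper: the same base case (reducing $\delta^1\delta^0 f=0$ to the third module axiom) and the same inductive step, applying Lemma \ref{LS1}(ii) twice together with Lemma \ref{LS2}(ii) so that the $d_x$-terms cancel and $(\delta\delta f)_x$ reduces to $\delta\delta(f_x)$, which vanishes by the induction hypothesis. The only difference is presentational — you isolate the identity $(\delta\delta f)_x=\delta\delta(f_x)$ explicitly, while the paper carries out the same cancellation inline.
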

  \begin{proof}
    For $f\in C^{0}(L;M),$ we have
    \begin{eqnarray*}
      \delta\delta f(x_1,x_2) &=& -\delta f([x_1,x_2])+(-1)^{x_1f}[x_1,\delta f(x_2)]+[\delta f(x_1),x_2] \\
       &=& [f,[x_1,x_2]]-(-1)^{x_1f}[x_1,[f,x_2]]-[[f,x_1],x_2]\\
       &=&0.
    \end{eqnarray*}
    Assume that $\delta\circ \delta f=0$, for all $f\in C^{q}(L;M)$, $0\le q\le n$, and let $f\in C^{n+1}(L;M)$. Then for all $x\in L$, by using  Lemmas \ref{LS1},\ref{LS2}, we have
    \begin{eqnarray*}
      (\delta \delta f)_x &=& (-1)^{xf}d_x\delta f-\delta ((\delta f)_x) \\
       &=& (-1)^{xf}d_x\delta f-(-1)^{xf}\delta d_x f+\delta\delta f_x  \\
      &=& 0.
    \end{eqnarray*}
  From this we conclude that $\delta \delta=0.$
  \end{proof}
We denote $\ker(\delta^n)$ by $Z^n(L;M)$ and image of $(\delta^{n-1})$ by $B^n(L;M)$.
We call  the $n$-th  cohomology $Z^n(L;M)/B^n(L;M)$ of the cochain complex $\{C^{n}(L;M),\delta^n\}$ as the  $n$-th deformation cohomology of $L$ with coefficients in $M$ and  denote it by $H^{n}(L;M)$. Since $L$ is a module over itself. So we can consider  cohomology groups $H^{n}(L;L)$. We call $H^{n}(L;L)$ as the $n$-th deformation cohomology group of $L$.
We  have $$Z^n(L;M)=Z_0^n(L;M)\oplus Z_1^n(L;M),  \;B^n(L;M)=B_0^n(L;M)\oplus B_1^n(L;M),$$ where $Z_i^n(L;M)$and $B_i^n(L;M)$ are submodules of  $C_i^n(L;M)$, $i=0,1$. Since boundary map  $\delta^n:C^{n}(L;M)\to C^{n+1}(L;M)$ is homogeneous of degree $0$, we conclude that $H^{n}(L;M)$ is $\mathbb{Z}_2$-graded and $$H^{n}(L;M)=H_0^{n}(L;M)\oplus H_0^{n}(L;M),$$ where $H_i^{n}(L;M)=Z_i^n(L;M)/B_i^n(L;M)$, $i=0,1$.\\
We define two bilinear maps   $$[-,-]:L\times C^{n}(L;M)\to C^{n}(L;M)\;\text{ and}\;  [-,-]: C^{n}(L;M)\times L\to C^{n}(L;M)$$ (we use same symbol for both the maps and differentiate them from context) by
\begin{eqnarray}\label{rbSLM1}
   [a,f](a_1,\cdots,a_n)&=&d_af(a_1,\cdots,a_n)\nonumber \\
   &=&[a,f(a_1,\cdots,a_n)]\nonumber\\
   &&-\sum_{i=1}^{n}(-1)^{a(a_1+\cdots+ a_{i-1})}f(a_1,\cdots,[a,a_i],\cdots,a_n),
\end{eqnarray}

\begin{eqnarray}\label{rbSLM2}
    [f,a](a_1,\cdots,a_n)&=& \sum_{i=1}^{n}(-1)^{a(a_1+\cdots+ a_{i-1})}f(a_1,\cdots,[a,a_i],\cdots,a_n)\nonumber\\
    &&-[a,f(a_1,\cdots,a_n)].
\end{eqnarray}
One can easily verify  $C^{n}(L;M)$ is a module over $L$ with two actions given by  \ref{rbSLM1} and \ref{rbSLM2}.
For each $f\in C^{n}(L;M)$, $n>0$, we define $f_j\in C^j(L;C^{n-j}(L;M))$, $0\le j\le n$ by
$$ f_j(a_1,\cdots,a_j)(a_{j+1},\cdots,a_{n})=f(a_1,\cdots,a_n),$$
 $f_0=f_n=f.$

We consider the cochain complex $\{C^{m}(L; C^{n-j}(L,M)),\delta^m\}$. As in \cite{MR871615}, One can easily verify the following result.
 \begin{lem}
 $$(\delta f_j)(a_1,\cdots,a_{j+1})=(\delta f)_{j+1}(a_1,\cdots,a_{j+1})+(-1)^j\delta(f_{j+1}(a_1,\cdots,a_{j+1})).$$
 \end{lem}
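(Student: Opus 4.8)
The plan is to prove the identity by a direct expansion of both sides followed by term matching, exactly in the spirit of the Lie superalgebra computation of Binegar \cite{MR871615}; no induction is required. First I would note that both sides are elements of $C^{j+1}(L;C^{n-j}(L,M))$, so it suffices to evaluate them at an arbitrary tuple $(a_1,\cdots,a_{j+1})$ and then at inner arguments $(a_{j+2},\cdots,a_{n+1})$, reducing the claim to an identity between $M$-valued functions of $a_1,\cdots,a_{n+1}$. Throughout one uses that $\deg(f_j)=\deg(f)$, so $f_j$ and $f$ carry the same sign in exponents, and that the module structure on the coefficients $C^{n-j}(L,M)$ is the one furnished by \ref{rbSLM1} and \ref{rbSLM2}.

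Next I would write out $\delta f_j(a_1,\cdots,a_{j+1})$ by the coboundary formula for the complex with coefficients in $C^{n-j}(L,M)$, i.e.\ with $m=j$. This yields three groups of terms: the bracket-into-slot terms $f_j(\cdots,[a_i,a_k],\cdots)$ for $1\le i<k\le j+1$; the left-action terms $[a_i,f_j(\cdots,\hat a_i,\cdots)]$ for $1\le i\le j$; and the single right-action term $(-1)^{j+1}[f_j(a_1,\cdots,a_j),a_{j+1}]$. I would then evaluate each left-action term at the inner arguments via \ref{rbSLM1}, and the right-action term via \ref{rbSLM2}. Each of these splits into a genuine bracket in $M$ plus a sum in which $a_i$ (respectively $a_{j+1}$) is bracketed into one of the inner slots.

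The heart of the argument is the comparison with $\delta f(a_1,\cdots,a_{n+1})=(\delta f)_{j+1}(a_1,\cdots,a_{j+1})(a_{j+2},\cdots,a_{n+1})$, whose terms I would classify by where their indices fall relative to the cut at $j+1$. The $\delta f$ bracket terms with $k\le j+1$ match the outer bracket-into-slot terms; the $\delta f$ left-action terms with index $\le j$ match the genuine-bracket pieces from \ref{rbSLM1}; the straddling bracket terms with $i\le j<k$ match the second pieces from \ref{rbSLM1}; and, crucially, the index-$(j+1)$ left-action term of $\delta f$ together with the straddling terms having $i=j+1$ are produced not by the outer left sum (which stops at $i=j$) but by the outer right-action term through \ref{rbSLM2}. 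After this matching, exactly the terms of $\delta f$ with all indices exceeding $j+1$ remain, namely the bracket terms with $j+1<i<k$, the left-action terms with $j+1<i\le n$, and the top right-action term at index $n+1$. Recognizing $f(a_1,\cdots,a_{j+1},\cdots)=f_{j+1}(a_1,\cdots,a_{j+1})(\cdots)$, these are precisely the terms of the inner coboundary $\delta(f_{j+1}(a_1,\cdots,a_{j+1}))$ evaluated at $(a_{j+2},\cdots,a_{n+1})$, save that their positional signs are computed from the absolute index rather than the inner one.

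The main obstacle is the sign bookkeeping forced by the $\mathbb{Z}_2$-grading: the Koszul signs in \ref{rbSLM1}, \ref{rbSLM2} and in the two coboundary operators must be reconciled slot by slot. The overall factor $(-1)^{j}$ emerges because each inner position sits $j+1$ places to the right of its position in the inner complex, so an absolute positional sign equals $(-1)^{j+1}$ times the corresponding inner positional sign; collecting the leftover inner terms therefore yields $(-1)^{j+1}\delta(f_{j+1}(a_1,\cdots,a_{j+1}))$, and transposing it gives the stated $+(-1)^{j}$. I expect the only genuinely delicate check to be verifying that the two splittings of the outer left- and right-action terms reproduce the index-$(j+1)$ contribution of $\delta f$ with the correct sign; the remaining identifications are routine term matching.
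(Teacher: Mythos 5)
Your proposal is correct and is essentially the paper's own approach: the paper gives no actual argument for this lemma, saying only that it ``can easily be verified'' as in \cite{MR871615}, and your direct expansion with slot-by-slot term matching --- outer bracket terms, the straddling terms produced by evaluating the module actions \ref{rbSLM1} and \ref{rbSLM2} (including the key observation that the index-$(j+1)$ contributions of $\delta f$ come from the outer right-action term), and the leftover all-inner terms assembling to $(-1)^{j+1}$ times the inner coboundary, hence $+(-1)^j$ after transposition --- is precisely that verification, with the correct sign analysis. The only caveat worth noting is that the matching of straddling and leftover terms forces the Koszul exponents in the module actions to include the degree of the cochain being acted on (i.e.\ the convention of $d_x$, with which the paper identifies \ref{rbSLM1}, rather than the displayed formula that omits $\deg f$), an assumption your plan makes implicitly and which reflects an inconsistency in the paper's own conventions rather than a gap in your argument.
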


\section{Cohomology of Leibniz Superalgebras in Low Degrees}\label{rbsec4}
Let $L=L_0\oplus L_1$ be a Leibniz superalgebra and $M=M_0\oplus M_1$ be a module over $L.$ For $m\in M_0=C_0^0(L;M)$, $f\in C_0^1(L;M)$ and $g\in C_0^2(L;M)$
\begin{equation}\label{SLCOH1}
  \delta^0 m(x)=-[m,x],
\end{equation}
\begin{equation}\label{SLCOH2}
 \delta^1 f(x_1,x_2)=-f([x_1,x_2])+[x_1,f(x_2)]+[f(x_1),x_2],
\end{equation}
\begin{eqnarray}\label{SLCOH3}
   \delta^2 g(x_1,x_2,x_3)&=& -g([x_1,x_2],x_3)-(-1)^{x_1x_2}g(x_2,[x_1,x_3])+g(x_1,[x_2,x_3])\nonumber\\
   &&+(-1)^{x_1g}[x_1,g(x_2,x_3)]-(-1)^{x_2x_1+x_2g}[x_2,g(x_1,x_3)]\nonumber\\
   &&-[g(x_1,x_2),x_3].
\end{eqnarray}
 The set $\{m\in M_0|[m,x]=0, \forall x\in L\}$ is called annihilator of $L$ in $M_0$ and is denoted by $ann_{M_0}L$.  We have \begin{eqnarray*}
          H_0^0(L;M) &=& \{m\in M_0|-[m,x]=0,\;\text{for all}\; x\in L\} \\
          &=& ann_{M_0}L.
         \end{eqnarray*}
 A homogeneous linear map $f:L\to M\;$ is called  derivation from $L\;$ to $M\;$ if $\delta^1f=0.$  For every $m\in M_0$ the map $x\mapsto [m,x]$ is called an inner derivation from  $L$ to $M$. We denote the vector spaces of derivations and inner derivations from  $L$ to $M$ by $Der(L;M)$ and $Der_{Inn}(L;M)$ respectively. By using \ref{SLCOH1}, \ref{SLCOH2} we have $$H_0^1(L;M)=Der(L;M)/Der_{Inn}(L;M).$$

Let $L$ be a Leibniz superalgebra and $M$ be a module over $L.$ We regard $M$ as an abelian Leibniz superalgebra.   An extension of $L$ by $M$ is an exact sequence
  \[\xymatrix{0\ar[r]& M\ar[r]^i &\mathcal{E}\ar[r]^\pi &L\ar[r] &0 }\tag{*}\]
 of Leibniz superalgebras  such that $$[x,i(m)]=[\pi(x),m],\; [i(m),x]=[m,\pi(x)].$$
 The exact sequence $(*)$ regarded  as a sequence of $K$-vector spaces, splits. Therefore without any loss of generality we may assume that  $\mathcal{E}$ as a  $K$-vector space coincides with the direct sum $L\oplus M$ and that $i(m)=(0,m),$ $\pi(x,m)=x.$ Thus we have  $\mathcal{E}=\mathcal{E}_0\oplus \mathcal{E}_1,$ where $\mathcal{E}_0=L_0\oplus M_0$, $\mathcal{E}_1=L_1\oplus M_1.$ The multiplication in $\mathcal{E}=L\oplus M$ has then necessarily the form $$ [(0,m_1),(0,m_2)]   =  0,\;  [(x_1,0),(0,m_1)] = (0,[x_1,m_1]),$$
 $$[(0,m_2),(x_2,0)] = (0,[m_2,x_2]),\; [(x_1,0),(x_2,0)] = ([x_1,x_2],h(x_1,x_2)),$$  for some $h\in C_0^2(L;M)$, for all  homogeneous $x_1,x_2\in L$, $m_1,m_2\in M.$
Thus, in general, we have
 \begin{equation}\label{SLCOH7}
[(x,m),(y,n)]=([x,y],[x,n]+[m,y]+h(x,y)),
\end{equation}
for all  homogeneous $(x,m)$, $(y,n)$ in $\mathcal{E}=L\oplus M.$\\
 Conversely, let $h:L\times L\to M$ be a bilinear homogeneous map of degree $0$. For homogeneous $(x,m)$, $(y,n)$ in $\mathcal{E}$ we define multiplication in $\mathcal{E}=L\oplus M$ by Equation \ref{SLCOH7}.
 For homogeneous $(x,m)$, $(y,n)$ and $(z,p)$ in $\mathcal{E}$ we have
\begin{eqnarray}\label{SLCOH4}
  &&[[(x,m),(y,n)],(z,p)]\nonumber\\
  &=&([[x,y],z],[[x,y],p]+[[x,n],z]+[[m,y],z]+[h(x,y),z]+h([x,y],z))\nonumber\\
\end{eqnarray}
\begin{eqnarray}\label{SLCOH5}
&&[(x,m),[(y,n),(z,p)]]\nonumber\\
&=&([x,[y,z]], [x,[y,p]]+[x,[n,z]]+[m,[y,z]]+[x,h(y,z)]+h([x,y],z)\nonumber\\
\end{eqnarray}
\begin{eqnarray}\label{SLCOH6}
&&[(y,n),[(x,m),(z,p)]]\nonumber\\
&=&([y,[x,z]],[y,[x,p]]+[y,[m,z]]+[n,[x,z]]+[y,h(x,z)]+h(y,[x,z]))\nonumber\\
\end{eqnarray}
 From Equations \ref{SLCOH4}, \ref{SLCOH5}, \ref{SLCOH6} we conclude that $\mathcal{E}=L\oplus M$ is a Leibniz superalgebra  with product given by Equation \ref{SLCOH7}  if and only if $\delta^2 h=0.$
 We denote the Leibniz superalgebra given by Equation \ref{SLCOH7} using notation $\mathcal{E}_h$. Thus for every cocycle $h\in C_0^2(L;M)$ there exists an extension
 \label{SLCOH8}
   \[E_h:\xymatrix{0\ar[r]& M\ar[r]^i &\mathcal{E}_h\ar[r]^\pi &L\ar[r] &0 }\]
of $L$ by $M$, where $i$ and $\pi$ are inclusion and projection maps, that is, $i(m)=(0,m),$ $\pi(x,m)=x$.
 We say that two extensions
  \[\xymatrix{0\ar[r]& M\ar[r] &\mathcal{E}^i\ar[r] &L\ar[r] &0 } \;(i=1,2)\]
  of $L$ by $M$ are equivalent if there is a Leibniz superalgebra isomorphism $\psi:\mathcal{E}^1\to \mathcal{E}^2$ such that following diagram commutes:
\[
\xymatrix{
  0 \ar[r] & M \ar[d]_{Id_M} \ar[r]^-{} & \mathcal{E}^1 \ar[d]_-{\psi} \ar[r]^-{} & L \ar[d]^-{Id_L} \ar[r] & 0 \\
  0 \ar[r] & M \ar[r]_-{} & \mathcal{E}^2 \ar[r]_-{} & L \ar[r] & 0
}
\tag{**}\]
We use $F(L,M)$to denote the set of all equivalence classes of extensions of   $L$ by $M$. Equation \ref{SLCOH7} defines a mapping of $Z_0^2(L;M)$
 onto $F(L,M)$. If for $h,h'\in Z_0^2(L;M)$ $E_h$ is equivalent to $E_{h'}$, then commutativity of diagram $(**)$ is equivalent to $$ \psi(x,m)=(x,m+f(x)),$$  for some $f\in C_0^1(L;M)$.
 We have
 \begin{eqnarray}
   \psi([(x_1,m_1),(x_2,m_2)]) &=& \psi([x_1,x_2],[x_1,m_2]+[m_1,x_2]+h(x_1,x_2))\nonumber \\
    &=& ([x_1,x_2],[x_1,m_2]+[m_1,x_2]+h(x_1,x_2)+f([x_1,x_2])),\nonumber\\
 \end{eqnarray}
  \begin{eqnarray}
    [\psi(x_1,m_1),\psi(x_2,m_2)] &=& [(x_1,m_1+f(x_1)),(x_2,m_2+f(x_2))] \nonumber\\
     &=& ([x_1,x_2],[x_1,m_2+f(x_2)]+[m_1+f(x_1),x_2]+h'(x_1,x_2)).\nonumber\\
  \end{eqnarray}
 Since $\psi([(x_1,m_1),(x_2,m_2)])=[\psi(x_1,m_1),\psi(x_2,m_2)] $, we have
 \begin{eqnarray}
   h(x_1,x_2)-h'(x_1,x_2) &=& - f([x_1,x_2])+[x_1,f(x_2)]+[f(x_1),x_2]\nonumber\\
    &=& \delta^1(f)(X_1,x_2)
 \end{eqnarray}
Thus  two extensions $E_h$ and $E_{h'}$ are equivalent if and only if there exists some $f\in C_0^1(L;M)$ such that $\delta^1f = h-h'$. We thus have following theorem:

 \begin{thm}
   The set $F(L,M)$ of  all equivalence classes of  extensions of $L$ by $M$ is in one to one correspondence with the cohomology group $H_0^2(L;M)$. This correspondence $\omega :H_0^2(L;M)\to F(L,M)$ is obtained  by assigning to each cocycle $h\in Z_0^2(L;M)$, the extension given by multiplication \ref{SLCOH7}.
 \end{thm}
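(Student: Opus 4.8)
The plan is to show that the assignment $\omega([h]) = [E_h]$, sending the cohomology class of a cocycle $h\in Z_0^2(L;M)$ to the equivalence class of the extension $E_h$ built from the multiplication \ref{SLCOH7}, is a well-defined bijection onto $F(L,M)$. Essentially all of the analytic content has already been assembled in the discussion preceding the statement; the proof itself is the verification that these pieces fit together into a well-defined, injective, and surjective map, so the work reduces to the three standard checks.

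First I would verify well-definedness. If $h, h' \in Z_0^2(L;M)$ represent the same class in $H_0^2(L;M)$, then $h - h' \in B_0^2(L;M)$, that is, $h - h' = \delta^1 f$ for some $f \in C_0^1(L;M)$. The computation carried out above, culminating in the condition $\delta^1 f = h - h'$, shows precisely that under this hypothesis the map $\psi(x,m) = (x, m + f(x))$ is a Leibniz superalgebra isomorphism making diagram $(**)$ commute, so $E_h$ and $E_{h'}$ are equivalent. Hence $\omega$ does not depend on the chosen representative of the cohomology class.

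Next I would establish surjectivity. Given an arbitrary extension $(*)$ of $L$ by $M$, I would use that it splits as a sequence of $K$-vector spaces, so after identifying $\mathcal{E}$ with $L \oplus M$ via $i(m) = (0,m)$ and $\pi(x,m) = x$, its multiplication is forced to be of the form \ref{SLCOH7} for a uniquely determined homogeneous degree-$0$ map $h \in C_0^2(L;M)$. Because $\mathcal{E}$ is a genuine Leibniz superalgebra, the identity derived from Equations \ref{SLCOH4}, \ref{SLCOH5} and \ref{SLCOH6} forces $\delta^2 h = 0$, so $h \in Z_0^2(L;M)$ and the given extension coincides with $E_h = \omega([h])$. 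Thus every class in $F(L,M)$ lies in the image of $\omega$.

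Finally, for injectivity I would run the equivalence criterion in the opposite direction: if $\omega([h]) = \omega([h'])$, then $E_h$ and $E_{h'}$ are equivalent, so there is $f \in C_0^1(L;M)$ with $\delta^1 f = h - h'$; hence $h - h' \in B_0^2(L;M)$ and $[h] = [h']$ in $H_0^2(L;M)$. Combining the three checks yields that $\omega$ is a bijection. The only point requiring real care --- and the closest thing to an obstacle --- is surjectivity, where one must ensure that the fixed vector-space splitting is compatible with the equivalence relation, so that a general extension is literally one of the $E_h$ rather than merely abstractly isomorphic to one; this compatibility is guaranteed by having normalized $i$ and $\pi$ in advance.
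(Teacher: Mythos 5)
Your proposal is correct and follows essentially the same route as the paper: the paper's theorem is stated as a summary of the preceding discussion (the vector-space splitting forcing the multiplication into the form \ref{SLCOH7}, the equivalence $\delta^2 h = 0$ with the Leibniz identity via Equations \ref{SLCOH4}--\ref{SLCOH6}, and the criterion $\delta^1 f = h - h'$ for equivalence of $E_h$ and $E_{h'}$), and your three checks of well-definedness, surjectivity, and injectivity are precisely a repackaging of those same facts into the standard bijection argument.
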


\section{ Deformation of  Leibniz superalgebras }\label{rbsec5}
Let $L=L_0\oplus L_1$ be a Leibniz superalgebra. We denote the ring of all formal power series with coefficients in L by $L[[t]]$. Clearly,  $L[[t]]=L_0[[t]]\oplus L_1[[t]]$. So every $a_t\in L[[t]]$ is of the form $a_t=a_{t_0}\oplus a_{t_1}$, where $a_{t_0}\in L_0[[t]]$ and $a_{t_1}\in L_1[[t]]$.

\begin{defn}\label{rb2}
Let $L=L_0\oplus L_1$ be a Leibniz superalgebra.  A formal one-parameter deformation of a Leibniz superalgebra $L$ is a $K[[t]]$-bilinear map  $$\mu_t : L[[t]]\times L[[t]]\to L[[t]]$$ satisfying the following properties:
\begin{itemize}
  \item[(a)]  $\mu_t(a,b)=\sum_{i=0}^{\infty}\mu_i(a,b) t^i$, for all  $a,b\in L$, where $\mu_i:L\times L\to L$, $i\ge 0$ are  bilinear homogeneous mappings of degree zero  and $\mu_0(a,b)=[a,b]$ is the original  product on L.
\item[(b)]\begin{equation}\label{DLT1}
   \mu_t( \mu_t(a,b),c)=\mu_t(a,\mu_t(b,c))-(-1)^{ab}\mu_t(b,\mu_t(a,c)),
  \end{equation}
  for all homogeneous $a,b,c\in L$.
\end{itemize}
The Equation  \ref{DLT1} is equivalent to following equation:
   \begin{eqnarray}\label{rbeqn1}
      &&\sum_{i+j=r} \mu_i( \mu_j(a,b),c)\notag\\
      &=& \sum_{i+j=r}\{\mu_i(a,\mu_j(b,c))-(-1)^{ab}\mu_i(b,\mu_j(a,c))\},
   \end{eqnarray}
for all homogeneous $a,b,c\in L$.
\end{defn}
 Now we define a formal deformation of finite order of a Leibniz superalgebra $L$.
\begin{defn}\label{rb3}
Let L be a Leibniz superalgebra.  A  formal one-parameter deformation of order $n$  of  L is a $K[[t]]$-bilinear map  $$\mu_t : L[[t]]\times L[[t]]\to L[[t]]$$ satisfying the following properties:
\begin{itemize}
  \item[(a)]  $\mu_t(a,b)=\sum_{i=0}^{n}\mu_i(a,b) t^i$,  $\forall a,b,c\in L$, where $\mu_i:L\times L\to T$, $0\le i\le n$, are $K$-bilinear homogeneous maps of degree $0$, and $\mu_0(a,b)=[a,b]$ is the original product on $L$.
      \item[(b)] \begin{equation}\label{FDLS}
   \mu_t( \mu_t(a,b),c)=\mu_t(a,\mu_t(b,c))-(-1)^{ab}\mu_t(b,\mu_t(a,c)),
  \end{equation}
  for all homogeneous $a,b,c\in L$.
\end{itemize}
\end{defn}
\begin{rem}\label{rbrem1}
  \begin{itemize}
    \item For $r=0$, conditions \ref{rbeqn1} is equivalent to the fact that $L$ is a Leibniz superalgebra.
    \item For $r=1$, conditions \ref{rbeqn1} is equivalent to
     \begin{eqnarray*}\label{rrbeqn1}
      0&=&-\mu_1([a,b],c)-[\mu_1(a,b),c]\\
      &&+ \mu_1(a,[b,c])-(-1){ab}\mu_1(b,[a,c])+ [a,\mu_1(b,c)]-(-1){ab}[b,\mu_1(a,c)]\\
       &=&\delta^2\mu_1(a,b,c); \;\text{for all homogeneous}\; a,b,c\in L.
   \end{eqnarray*}
          Thus for $r=1$,  \ref{rbeqn1} is equivalent to saying that $\mu_1\in C_0^2(L;L)$  is a cocycle. In general, for $r\ge 0$, $\mu_r$ is just a 2-cochain, that is,  in $\mu_r\in C_0^2(L;L).$
  \end{itemize}
\end{rem}
\begin{exmpl}
Consider the Leibniz superalgebra $L=L_0\oplus L_1$ in Example \ref{rbLSe1}.  Define a bilinear  mapping $\mu_1:L\times L\to L$ by
$$\mu_1(z,z)=x,\;\mu_1(x,x)=\mu_1(x,y)=\mu_1(y,x)=\mu_1(y,y)=0,$$ $$\mu_1(z,x)=\mu_1(x,z)=\mu_1(y,z)=\mu_1(z,y)=0.$$ Clearly, $\mu_1$ is homogeneous of degree $0.$ One can easily verify that $\mu_t=\mu_0+\mu_1t$, where $\mu_0=[-,-]$ is the product in the Leibniz superalgebra $L$,  is a  formal one parameter deformation of $L.$
\end{exmpl}
\begin{defn}
  The cochain  $\mu_1 \in C_0^2(L;L)$ is called infinitesimal of the   deformation $\mu_t$. In general, if $\mu_i=0,$ for $1\le i\le n-1$, and $\mu_n$ is a nonzero cochain in  $C_0^2(L;L)$, then $\mu_n$ is called n-infinitesimal of the  deformation $\mu_t$.
\end{defn}
\begin{prop}
  The infinitesimal   $\mu_1\in C_0^2(L;L)$ of the  deformation  $\mu_t$ is a cocycle. In general, n-infinitesimal  $\mu_n$ is a cocycle in $C_0^2(L;L).$
\end{prop}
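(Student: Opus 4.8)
The plan is to exploit Remark \ref{rbrem1}, which already unpacks the deformation equation \ref{rbeqn1} order by order. For the first statement, that $\mu_1$ is a cocycle, the work is essentially done: the remark records that the $r=1$ case of \ref{rbeqn1} is literally the equation $\delta^2\mu_1(a,b,c)=0$ for all homogeneous $a,b,c\in L$. Since $\mu_1\in C_0^2(L;L)$ by part (a) of Definition \ref{rb2}, this says exactly $\mu_1\in Z_0^2(L;L)$, so $\mu_1$ is a cocycle. I would state this explicitly by writing out the $r=1$ instance of \ref{rbeqn1}, moving all terms to one side, and recognizing the result as $\delta^2\mu_1$ via formula \ref{SLCOH3}.

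For the general statement about the $n$-infinitesimal, I would argue by extracting the coefficient of $t^n$ from the deformation equation under the hypothesis $\mu_i=0$ for $1\le i\le n-1$. The idea is to isolate the $r=n$ case of \ref{rbeqn1}, namely $\sum_{i+j=n}\mu_i(\mu_j(a,b),c)=\sum_{i+j=n}\{\mu_i(a,\mu_j(b,c))-(-1)^{ab}\mu_i(b,\mu_j(a,c))\}$. The key observation is that in each product $\mu_i\mu_j$ with $i+j=n$, at least one of the indices lies in the range $1\le \cdot\le n-1$ unless $\{i,j\}=\{0,n\}$. Therefore every summand with $1\le i,j\le n-1$ vanishes by hypothesis, and only the terms with $i=0,j=n$ or $i=n,j=0$ survive. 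Collecting precisely these surviving terms should reproduce the six terms appearing in the expression for $\delta^2\mu_n(a,b,c)$ from \ref{SLCOH3}, with $\mu_0=[-,-]$ supplying the bracket actions and $\mu_n$ playing the role of the $2$-cochain $g$.

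The main step is thus a bookkeeping verification: after discarding the vanishing middle terms, I would match the four terms $\mu_0(\mu_n(a,b),c)$, $\mu_n(\mu_0(a,b),c)$ and their counterparts on the right-hand side against the sign-weighted terms $[\mu_n(a,b),c]$, $\mu_n([a,b],c)$, $[a,\mu_n(b,c)]$, $\mu_n(a,[b,c])$, $(-1)^{ab}[b,\mu_n(a,c)]$, $(-1)^{ab}\mu_n(b,[a,c])$ of \ref{SLCOH3}, taking care that the degree-zero homogeneity of $\mu_n$ makes the sign factors $(-1)^{x_i g}$ trivial where they occur. I expect the only real obstacle to be sign matching, specifically confirming that the $\mathbb{Z}_2$-graded Leibniz signs produced by the deformation identity align exactly with the signs built into the coboundary $\delta^2$; since $\mu_n$ has degree $0$ this should be routine, and the conclusion $\delta^2\mu_n=0$, hence $\mu_n\in Z_0^2(L;L)$, follows immediately.
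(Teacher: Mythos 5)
Your proposal is correct and takes essentially the same route as the paper: the paper's proof simply invokes Remark \ref{rbrem1} for $n=1$ and declares the case $n>1$ ``similar,'' which is precisely the coefficient-of-$t^n$ extraction you carry out, where the hypothesis $\mu_i=0$ for $1\le i\le n-1$ kills all mixed terms and the surviving $(i,j)\in\{(0,n),(n,0)\}$ terms assemble into $\delta^2\mu_n=0$ via \ref{SLCOH3}. Your write-up merely supplies the bookkeeping (including the trivialization of the signs $(-1)^{x_ig}$ since $\deg\mu_n=0$) that the paper leaves implicit.
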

\begin{proof}
  For n=1, proof is obvious from the Remark \ref{rbrem1}. For $n>1$, proof is similar.
\end{proof}
\section{Equivalence of Formal  Deformations and Cohomology }\label{rbsec6}
Let   $\mu_t$  and $\tilde{\mu_t}$ be two formal deformations of a Leibniz superalgebra $L-L_0\oplus L_1$. A formal isomorphism from the deformation $\mu_t$ to $\tilde{\mu_t}$  is a $K[[t]]$-linear automorphism $\Psi_t:L[[t]]\to L[[t]]$ of the  form  $\Psi_t=\sum_{i=0}^{\infty}\psi_it^i$, where each $\psi_i$ is a homogeneous $K$-linear map $L\to L$ of degree $0$, $\psi_0(a)=a$, for all $a\in T$ and $$\tilde{\mu_t}(\Psi_t(a),\Psi_t(b))=\Psi_t\circ\mu_t(a,b),$$ for all $a,b\in L.$
\begin{defn}
  Two  deformations $\mu_t$  and $\tilde{\mu_t}$ of a Leibniz superalgebra $L$ are said to be equivalent if there exists a formal isomorphism  $\Psi_t$ from $\mu_t$ to  $\tilde{\mu_t}$.
\end{defn}
 Formal isomorphism on the collection of all  formal deformations of a Leibniz superalgebra $L$ is an equivalence relation.
\begin{defn}
  Any formal deformation of T that is equivalent to the deformation $\mu_0$ is said to be a trivial deformation.
\end{defn}

\begin{thm}
  The cohomology class of the infinitesimal of a  deformation $\mu_t$ of   a Leibniz Superalgebra $L$ is determined by the equivalence class of $\mu_t$.
\end{thm}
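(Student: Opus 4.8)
The plan is to show that if $\mu_t$ and $\tilde{\mu_t}$ are equivalent deformations, then their infinitesimals $\mu_1$ and $\tilde{\mu_1}$ differ by a coboundary, hence represent the same cohomology class in $H_0^2(L;L)$. First I would invoke the defining relation of a formal isomorphism $\Psi_t = \sum_{i\ge 0}\psi_i t^i$ (with $\psi_0 = \mathrm{Id}$), namely
\begin{equation}
\tilde{\mu_t}(\Psi_t(a),\Psi_t(b)) = \Psi_t\bigl(\mu_t(a,b)\bigr),
\end{equation}
and expand both sides as formal power series in $t$, collecting the coefficient of $t^1$.

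On the right-hand side, the coefficient of $t$ picks up contributions from $\psi_0\mu_1 + \psi_1\mu_0$, giving $\mu_1(a,b) + \psi_1([a,b])$. On the left-hand side, since $\tilde{\mu_t} = \sum \tilde{\mu_i} t^i$ and $\Psi_t(a) = a + \psi_1(a)t + \cdots$, the coefficient of $t$ comes from $\tilde{\mu_1}(a,b)$ together with the two terms where one factor contributes $\psi_1$ at order $t$ against $\tilde{\mu_0} = [-,-]$ at order $t^0$, namely $[\psi_1(a),b] + [a,\psi_1(b)]$. Equating the two coefficients yields
\begin{equation}
\tilde{\mu_1}(a,b) + [\psi_1(a),b] + [a,\psi_1(b)] = \mu_1(a,b) + \psi_1([a,b]),
\end{equation}
so that
\begin{equation}
\mu_1(a,b) - \tilde{\mu_1}(a,b) = [\psi_1(a),b] + [a,\psi_1(b)] - \psi_1([a,b]).
\end{equation}
The next step is to recognize the right-hand side as $\delta^1\psi_1$. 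Comparing with the formula \ref{SLCOH2} for the coboundary $\delta^1 f(x_1,x_2) = -f([x_1,x_2]) + [x_1,f(x_2)] + [f(x_1),x_2]$ on the one-cochain $\psi_1 \in C_0^1(L;L)$, I see these expressions agree exactly. Hence $\mu_1 - \tilde{\mu_1} = \delta^1\psi_1 \in B_0^2(L;L)$, which shows $\mu_1$ and $\tilde{\mu_1}$ are cohomologous and therefore determine the same class in $H_0^2(L;L)$.

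The computation is essentially bookkeeping, so the main obstacle is purely one of sign discipline: the super setting introduces Koszul signs $(-1)^{ab}$ throughout, and I must be careful that the $t^1$-coefficient extraction is carried out with the correct parities, keeping in mind that all the $\mu_i$, $\tilde{\mu_i}$, and $\psi_i$ are homogeneous of degree $0$, which mercifully suppresses most sign factors at first order. I would verify that the degree-zero hypotheses force the relevant exponents to vanish so that the identification with $\delta^1\psi_1$ as given by \ref{SLCOH2} is clean. A final remark should note that since any two representatives of the infinitesimal arising from equivalent deformations differ by a coboundary, the cohomology class is a genuine invariant of the equivalence class of $\mu_t$, which is precisely the assertion of the theorem.
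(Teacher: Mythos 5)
Your proposal is correct and follows essentially the same route as the paper's own proof: expand the intertwining relation $\tilde{\mu_t}(\Psi_t(a),\Psi_t(b))=\Psi_t(\mu_t(a,b))$, compare coefficients of $t$, and identify $\mu_1-\tilde{\mu_1}$ with $\delta^1\psi_1$ via the formula \ref{SLCOH2}. The paper states this coefficient comparison without detail, so your explicit bookkeeping (and the observation that the degree-zero hypotheses kill all Koszul signs at first order) is simply a fuller write-up of the same argument.
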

\begin{proof}
  Let  $\Psi_t$ be a formal  isomorphism  from  $\mu_t$ to  $\tilde{\mu_t}$. So  we have, for  all $a,b\in L$,  $\tilde{\mu_t}(\Psi_ta,\Psi_tb)=\Psi_t\circ \mu_t(a,b)$. This implies that \begin{eqnarray*}
     (\mu_1-\tilde{\mu_1})(a,b)&=& [\psi_1a,b]+[a,\psi_1b]-\psi_1([a,b])\\
     &=& \delta^1\psi_1(a,b).
  \end{eqnarray*}
 So we have $\mu_1-\tilde{\mu_1}=\delta^1\psi_1$. This completes the proof.
\end{proof}

\section*{References}

\bibliographystyle{alpha}
\bibliography{cohom-deform-leibniz-Superalgebras}

\begin{thebibliography}{10}
\expandafter\ifx\csname url\endcsname\relax
  \def\url#1{\texttt{#1}}\fi
\expandafter\ifx\csname urlprefix\endcsname\relax\def\urlprefix{URL }\fi
\expandafter\ifx\csname href\endcsname\relax
  \def\href#1#2{#2} \def\path#1{#1}\fi

\bibitem{MR1252069}
J.-L. Loday, Une version non commutative des alg\`ebres de {L}ie: les
  alg\`ebres de {L}eibniz, Enseign. Math. (2) 39~(3-4) (1993) 269--293.

\bibitem{MR486011}
V.~G. Kac, Lie superalgebras, Advances in Math. 26~(1) (1977) 8--96.

\bibitem{MR0422372}
D.~A. Le\u{\i}tes, Cohomology of {L}ie superalgebras, Funkcional. Anal. i
  Prilo\v{z}en. 9~(4) (1975) 75--76.

\bibitem{MR1888379}
A.~DZHUMADIL'DAEV, Cohomologies of colour {L}eibniz algebras: pre-simplicial
  approach, in: Lie theory and its applications in physics, {III} ({C}lausthal,
  1999), World Sci. Publ., River Edge, NJ, 2000, pp. 124--136.

\bibitem{MR2286721}
D.~Liu, N.~Hu, Leibniz superalgebras and central extensions, J. Algebra Appl.
  5~(6) (2006) 765--780.

\bibitem{MR2102079}
D.~Liu, Steinberg-{L}eibniz algebras and superalgebras, J. Algebra 283~(1)
  (2005) 199--221.

\bibitem{MR171807}
M.~Gerstenhaber, On the deformation of rings and algebras, Ann. of Math. (2) 79
  (1964) 59--103.

\bibitem{MR0207793}
M.~Gerstenhaber, On the deformation of rings and algebras. {II}, Ann. of Math.
  84 (1966) 1--19.

\bibitem{MR240167}
M.~Gerstenhaber, On the deformation of rings and algebras. {III}, Ann. of Math.
  (2) 88 (1968) 1--34.

\bibitem{MR389978}
M.~Gerstenhaber, On the deformation of rings and algebras. {IV}, Ann. of Math.
  (2) 99 (1974) 257--276.

\bibitem{MR704600}
M.~Gerstenhaber, S.~D. Schack, On the deformation of algebra morphisms and
  diagrams, Trans. Amer. Math. Soc. 279~(1) (1983) 1--50.

\bibitem{MR871615}
B.~Binegar, Cohomology and deformations of {L}ie superalgebras, Lett. Math.
  Phys. 12~(4) (1986) 301--308.

\bibitem{MR4156100}
R.~Tang, S.~Hou, Y.~Sheng, Lie 3-algebras and deformations of relative
  {R}ota-{B}axter operators on 3-{L}ie algebras, J. Algebra 567 (2021) 37--62.

\bibitem{MR4171679}
R.~B. Yadav, N.~Behera, R.~Bhutia, Equivariant one-parameter deformations of
  {L}ie triple systems, J. Algebra 568 (2021) 467--479.

\bibitem{MR4120091}
G.~Mukherjee, R.~B. Yadav, Equivariant one-parameter deformations of
  associative algebras, J. Algebra Appl. 19~(6) (2020) 2050114, 18.

\end{thebibliography}

\end{document}